\tikzstyle{edge} = [fill,opacity=.5,fill opacity=.5,line cap=round, line join=round, line width=50pt]
\theoremstyle{plain}
\theoremstyle{definition}
\newtheorem*{unnum}{Theorem}
\newtheorem{theorem}{Theorem}[section]
\newtheorem{remark}[theorem]{Remark}
\newtheorem{lemma}[theorem]{Lemma}
\newtheorem{definition}[theorem]{Definition}
\newtheorem{example}[theorem]{Example}
\newtheorem{corollary}[theorem]{Corollary}
\DeclareMathAlphabet{\mathpzc}{OT1}{pzc}{m}{it}
\newcommand{\symm}{\mathfrak{S}}
\newcommand{\s}{\sigma}
\newcommand{\supp}{\textnormal{\textsf{supp}}}
\newcommand{\pos}{\textnormal{\textsf{PosPair}}}
\newcommand{\val}{\textnormal{\textsf{ValPair}}}
\newcommand{\expat}{\textnormal{\textsf{expat}}}
\newcommand{\ub}[1]{\underbracket[.5pt][1pt] {#1}}
\begin{document}

\title{The range of repetition in reduced decompositions}%

\date{}

\author{Bridget Eileen Tenner}
\address{Department of Mathematical Sciences, DePaul University, Chicago, IL, USA}
\email{bridget@math.depaul.edu}
\thanks{Research partially supported by Simons Foundation Collaboration Grant for Mathematicians 277603 and by a University Research Council Competitive Research Leave from DePaul University. This material is based partly upon work supported by the Swedish Research Council under grant no. 2016-06596 while the author was in residence at Institut Mittag-Leffler in Djursholm, Sweden during the winter of 2020.}

\keywords{}%

\subjclass[2010]{Primary: 05A05; %permutations, words, matrices
Secondary: 05A19, %combinatorial identities, bijective combinatorics
05E15%combinatorial problems concerning the classical groups 
}

\begin{abstract}%
Given a permutation $w$, we look at the range of how often a simple reflection $\s_k$ appears in reduced decompositions of $w$. We compute the minimum and give a sharp upper bound on the maximum. That bound is in terms of $321$- and $3412$-patterns in $w$, specifically as they relate in value and position to $k$. We also characterize when that minimum and maximum are equal, refining a previous result that braid moves are equivalent to $321$-patterns.
\end{abstract}

\maketitle

There is an intimate relationship between occurrences of the patterns $321$ and $3412$, and repeated letters in reduced decompositions. These two patterns (entry P0006 of \cite{dppa}), more than any others, seem to determine the structure of the Bruhat order of the symmetric group, and have arisen in numerous papers \cite{bjs, petersen tenner, ragnarsson tenner 1, ragnarsson tenner 2, tenner patt-bru, tenner repetition}. In \cite{tenner patt-bru}, we showed that having no repeated letters was equivalent to avoiding $321$ and $3412$. That instigated the study of boolean elements, as in \cite{claesson kitaev ragnarsson tenner, hultman vorwerk, ragnarsson tenner 1, ragnarsson tenner 2, ragnarsson tenner 3}. In \cite{tenner repetition}, we substantially generalized that repetition result, showing the following.

\begin{unnum}[{\cite[Theorem 3.2]{tenner repetition}}]
For any permutation $w$, the number of repeated letters in any reduced decomposition of $w$ is less than or equal to the total number of $321$ and $3412$ patterns in $w$. Moreover, the quantities are equal if and only if $w$ avoids the ten patterns
$$\{4321,\ 34512,\ 45123,\ 35412,\ 43512,\ 45132,\ 45213,\ 53412,\ 45312,\ 45231\}.$$
\end{unnum}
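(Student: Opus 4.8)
The plan is to run an induction over the covering relations of the weak order (equivalently, by deleting a descent), tracking two block-additive statistics. First I would record that the number of repeated letters equals $r(w):=\ell(w)-|\supp(w)|$, which is independent of the chosen reduced decomposition because both $\ell(w)$ and $\supp(w)$ are. Writing $p(w):=N_{321}(w)+N_{3412}(w)$ for the right-hand side, I would note that both $r$ and $p$ are additive across the direct-sum decomposition $w=w_1\oplus\cdots\oplus w_c$: for $r$ because $|\supp(w)|=n-c$ gives $r(w)=\sum_i(\ell(w_i)-n_i+1)=\sum_i r(w_i)$, and for $p$ because $321$ and $3412$ are both sum-indecomposable, so every occurrence lies inside a single block. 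Since the ten listed patterns are themselves indecomposable, $w$ avoids them iff each block does; thus it suffices to prove the inequality, and the equality characterization, for indecomposable $w$.

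Next I would analyze a single cover $w'=ws_k\lessdot w$ at a descent $k$. The repetition statistic changes by $\Delta r:=r(w)-r(w')\in\{0,1\}$, with $\Delta r=1$ exactly when $s_k$ still occurs in $w'$, i.e.\ when $k\in\supp(w')$. For $p$, the key observation is that $w$ and $w'$ differ only by transposing the values in the adjacent positions $k,k+1$; an occurrence using exactly one of these two positions is matched, via the adjacency $k\leftrightarrow k+1$, with an occurrence of $w'$ using the other, and these cancel. Hence the change in $p$ is controlled entirely by occurrences using both positions, giving the count of new $321$'s as $\#\{i<k:w(i)>w(k)\}+\#\{j>k+1:w(j)<w(k+1)\}$, whereas the $3412$-count may actually decrease (a $3412$ whose ``$34$'' occupied positions $k,k+1$ is destroyed by the swap).

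For the inequality I would then induct from the identity, where $r=p=0$. The subtlety is that the change in $p$ can be strictly smaller than $\Delta r$ at a given cover---for instance $45123\lessdot 54123$ has $\Delta r=1$ but no net change in $p$---so the induction cannot use an arbitrary cover. I would therefore prove the auxiliary lemma that every non-identity $w$ admits a cover $w'\lessdot w$ along which the drop in $p$ is at least the drop in $r$. Granting this, $r(w)\le p(w)$ follows by propagating the bound from the base case.

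The equality statement is where the real work lies. Setting $\delta(w):=p(w)-r(w)\ge 0$, the ten permutations are exactly the minimal elements, under pattern containment, with $\delta=1$: a finite check confirms $\delta=1$ for each and $\delta=0$ for all of their one-point deletions. One direction is then immediate, since $\delta(w)=0$ forces $w$ to avoid every permutation of positive slack, in particular these ten. The hard direction is the converse---that avoiding all ten forces $\delta(w)=0$---and this is the main obstacle. The difficulty is that pattern-avoidance is not inherited along covers (the element $ws_k$ need not be a subpattern of $w$) and the $3412$-count is non-monotone, so I cannot simply restrict the induction to avoiders. Instead I would show that if $w$ avoids the ten, one can always select a cover $w'$ preserving both equality ($\delta(w')=0$) and the balance of the two statistics, so that equality propagates from the identity. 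Establishing this requires a case analysis, organized by the relative position and value of the two swapped entries, verifying that each way the slack could be created forces one of the ten local configurations. That case analysis is the crux of the argument.
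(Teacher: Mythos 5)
Note first that this statement is quoted background: it is Theorem~3.2 of \cite{tenner repetition}, and the present paper contains no proof of it, so your attempt can only be judged on its own merits. Your preliminary reductions are correct: the repetition count equals $r(w)=\ell(w)-|\supp(w)|$ and is therefore independent of the reduced decomposition; both $r$ and $p$ are additive over direct sums; the ten patterns are sum-indecomposable; and your cover bookkeeping is right (occurrences using at most one of positions $k,k+1$ biject between $w$ and $ws_k$, $\Delta r\in\{0,1\}$ with $\Delta r=1$ exactly when $k\in\supp(ws_k)$, and $45123\lessdot 54123$ really does have $\Delta r=1$, $\Delta p=0$, so a naive induction fails). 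The problem is that everything after these reductions is deferred. The inequality rests entirely on the asserted lemma that every non-identity $w$ admits a cover with $\Delta p\ge\Delta r$; you give no proof, and this is not a routine verification --- as you yourself observe, the $3412$-count is non-monotone along covers, so exhibiting and verifying a ``good'' descent is essentially the whole content of the inequality, not an auxiliary step.

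The equality characterization contains, in addition, a genuine logical error. You call the direction ``$\delta(w)=0$ implies $w$ avoids the ten'' immediate ``since $\delta(w)=0$ forces $w$ to avoid every permutation of positive slack.'' That inference presupposes that positive slack is inherited upward under pattern containment: if $w$ contains $\pi$ and $\delta(\pi)>0$, then $\delta(w)>0$. This is not automatic. An occurrence of $\pi$ in $w$ does give $p(w)\ge p(\pi)$, since patterns inside the occurrence survive, but $r(w)$ can exceed $r(\pi)$ by an arbitrary amount, so no comparison of the two slacks follows; indeed, the upward-closure of $\{w:\delta(w)>0\}$ is precisely (one half of) what the theorem asserts, so invoking it here is circular. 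Relatedly, your claim that the ten are ``exactly'' the minimal permutations with $\delta=1$ is only half-checked by your finite computation: verifying $\delta=1$ for each and $\delta=0$ for their one-point deletions shows they are minimal, not that they are exhaustive --- exhaustiveness is the hard direction, which you explicitly leave as an unperformed case analysis. In sum, the proposal is a sensible plan with correct scaffolding, but all three load-bearing components --- the good-cover lemma, the upward closure of positive slack, and the avoidance-implies-equality analysis --- are missing.
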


In the present work, we refine those efforts still further, to describe the range of repetition that each letter, individually, may have in reduced decompositions of a permutation. Note that this is not just for the long element. Note also that our results do not just refer to the existence of $321$ and $3412$ patterns in $w$, but rather to the number of occurrences of these patterns and to their locations in $w$. Both of these features of the patterns have been notoriously hard to work with and have been rarely utilized in the existing literature. Given the focus on individual letters in reduced decompositions, our results might call to mind Macdonald's formula \cite{macdonald}, and its recent bijective proof due to Billey, Holroyd, and Young \cite{billey holroyd young}, which looks at the individual factors appearing, with multiplicity, in each reduced decomposition of an element.

Throughout this work, we assume that the reader is familiar with basic concepts like permutation patterns, simple reflections, and Coxeter relations. We refer the reader to texts like \cite{bjorner brenti, kitaev} for more information.

Let $\min_k(w)$ and $\max_k(w)$ be the minimum and maximum number of times $\s_k$ can appear in a reduced decomposition of $w$, respectively. 
Even in $\symm_3$ it is obvious that $\min_k(w)$ and $\max_k(w)$ need not be equal. However, an understanding of these values and their relationship to the permutation $w$ has previously been elusive. Our main results show that
\begin{itemize}
\item $\min_k(w)$ is equal to measure of ``expatriation'' (Theorem~\ref{thm:minimal}), 
\item having $\max_k(w) > 1$ is equivalent to having a $321$- or $3412$-pattern that ``straddles'' $k$ in a particular way (Theorem~\ref{thm:max>1}),
\item $\max_k(w)$ is bounded by the number of $321$ and $3412$ patterns ``straddling'' $k$ (Theorem~\ref{thm:maximal}), and
\item a characterization of when $\min_k(w) = \max_k(w)$ (Theorem~\ref{thm:fixed}).
\end{itemize}

This paper is organized as follows. In Section~\ref{sec:terminology}, we introduce key notation and terminology relevant to our work, with the understanding that the reader is referred to other sources for basic definitions. The question of minimally many appearances is studied in Section~\ref{sec:minimal}, while maximally many appearances are covered in Section~\ref{sec:maximal}. That latter section will rely on the concept of ``straddling'' patterns and ``pairs,'' and these are covered in Section~\ref{sec:straddling}, along with fundamental results justifying their inclusion in this work. In Section~\ref{sec:fixed}, we characterize when all reduced decompositions of a permutation contain the same number of $\s_k$ factors. That result is particularly intriguing because requiring a fixed number of $\s_k$ factors means avoiding any braid factors using $\s_k$, and braid factors in general are equivalent to $321$-patterns \cite{bjs, tenner rdpp, tenner rwm}. We conclude with a sampling of open questions in Section~\ref{sec:open}.

\section{Notation and terminology}\label{sec:terminology}

We write $\symm_n$ for the permutations of $[1,n]$, and $\s_i$ for the simple reflection exchanging $i$ and $i+1$. We consider permutations as maps, and so $w\s_i$ transposes the values in positions $i$ and $i+1$ in $w$, whereas $\s_iw$ transposes the positions of the values of $i$ and $i+1$ in $w$. For example,
$$\s_1\s_2  = 231 \in\symm_3.$$
The set of reduced decompositions of $w$ is denoted $R(w)$, and the \emph{support} of $w$, denoted $\supp(w)$, is the collection of distinct letters appearing in any element of $R(w)$. Although we will not use it here, we note that $[1,n-1]\setminus\supp(w)$ is the \emph{connectivity set} introduced in \cite{rps connectivity} and generalized in \cite{marietti}. 

For the remainder of the paper, we assume that $w \in \symm_n$, and $k \in [1,n-1]$ is fixed.

We are interested in understanding how often a particular $\s_k$ can appear in elements of $R(w)$. This is different from the results of \cite{tenner repetition}, which characterized overall repetition, not repetition of an individual letter. To discuss this, we make the following definitions, as suggested above.

\begin{definition}
Let
$$\min\nolimits_k(w)$$
be the minimum number of times that $\s_k$ appears in any element of $R(w)$, and let
$$\max\nolimits_k(w)$$
be the maximum number of times that $\s_k$ appears in any element of $R(w)$.
\end{definition}

The definitions suggest that the number of $\s_k$ factors in a reduced decomposition of $w$ need not be fixed, and indeed that is the case. One need only look so far as the long element in $\symm_3$ to see an example of this, and we give a slightly more interesting example below.

\begin{example}\label{ex:4312}
Let $w = 4312$, and so
$$R(w) = \{\s_2\s_3\s_2\s_1\s_2,\ \s_3\s_2\s_3\s_1\s_2,\ \s_3\s_2\s_1\s_3\s_2,\ \s_2\s_3\s_1\s_2\s_1,\ \s_2\s_1\s_3\s_2\s_1\}.$$
The ranges of repetition of the letters $\s_1$, $\s_2$, and $\s_3$ are recorded in Table~\ref{table:4312 reduced decompositions}.
\end{example}
\begin{table}[htbp]
$\begin{array}{c||c|c}
\raisebox{0in}[.2in][.1in]{\ $k$\ } & \min_k(w) & \max_k(w)\\
\hline
\hline
\raisebox{0in}[.2in][.1in]{$1$} & 1 & 2\\
\hline
\raisebox{0in}[.2in][.1in]{$2$} & 2 & 3\\
\hline
\raisebox{0in}[.2in][.1in]{$3$} & 1 & 2
\end{array}$
\caption{Ranges of repetition for reduced decompositions of $4312$.}\label{table:4312 reduced decompositions}
\end{table}

As stated above, occurrences of the patterns $321$ and $3412$ are closely linked with repeated letters in the permutation's reduced decomposition. This was first suggested in \cite{tenner patt-bru}, expanded further by Daly in \cite{daly}, and broadly proved in \cite{tenner repetition}. In the latter work, we mapped repeated letters in the reduced decomposition (that is, appearances of simple reflections that were not the first appearances of that letter) to occurrences of $321$ and $3412$ in the permutation. The distinction between a \emph{repeat} of a letter and an \emph{occurrence} of that letter warrants a pause. For example, in the reduced decomposition $\s_2\s_3\s_2\s_1\s_2 \in R(4312)$, the letter $\s_2$ appears three times, but it repeats twice. In other words, in any product,
\begin{equation}\label{eqn:occurrences = repeats + 1}
\# \text{ occurrences of } \s_k = \# \text{ repeats of } \s_k + 1,
\end{equation}
and so understanding the number of occurrences of a letter is equivalent to understanding the amount of repetition of that letter.

Our goal in this paper is to understand how often a given letter can appear, among all elements of $R(w)$. Thus we are interested in the number of \emph{occurrences} of that letter. Because of the relationship described in Equation~\eqref{eqn:occurrences = repeats + 1}, this will, in a sense, refine the main result of \cite{tenner repetition}.  That refinement will require more precise language for talking about the permutation patterns $321$ and $3412$.

\begin{definition}\label{defn:straddling}\
\begin{itemize}
\item If $w$ has a $321$-pattern in positions $i_1 < i_2 < i_3$ with $i_1 \le k < i_3$, then this occurrence \emph{straddles $k$ in position} and $(i_1,i_3)$ is a \emph{position pair} at $k$. 
\item If $w$ has a $321$-pattern with values $j_1 < j_2 < j_3$ such that $j_1 \le k < j_3$, then this occurrence \emph{straddles $k$ in value} and $(j_3,j_1)$ is a \emph{value pair} at $k$. 
\item If $w$ has a $3412$-pattern in positions $i_1 < i_2 \le k < i_3 < i_4$, then this occurrence \emph{straddles $k$ in position} and $(i_2,i_3)$ is a \emph{position pair} at $k$.
\item If $w$ has a $3412$-pattern with values $j_1 < j_2 \le k < j_3 < j_4$, then this occurrence \emph{straddles $k$ in value} and $(j_4,j_1)$ is a \emph{value pair} at $k$.
\end{itemize}
Each position/value pair is said to \emph{mark} its corresponding pattern(s).
\end{definition}

The idea of position (respectively, value) straddling is that roughly half of the occurrence is weakly to the left of the $k$th position (resp., is less than or equal to $k$), and roughly half of the occurrence is to the right of the $k$th position (resp., is greater than $k$). The idea of a straddling pair is to identify the largest and smallest letters in a straddling pattern.

\begin{remark}\label{rem:straddling pair marks extremes}
In a position pair $(x,y)$, the largest value in the pattern (whether $321$ or $3412$) is in position $x$ and the smallest value is in position $y$. In a value pair $(x,y)$, the largest value in the pattern is $x$ and the smallest is $y$.
\end{remark} 

It is possible for a pair to mark more than one pattern, and a single pair could mark both a $321$-pattern and a $3412$-pattern.

\begin{example}
Consider $w = 5273416$. The $321$-patterns in $w$ are $521$, $531$, $541$, $731$, and $741$, and the lone $3412$-pattern in $w$ is $5734$. These patterns straddle various positions and values, as catalogued in Table~\ref{table:straddling example}.
\end{example}
\begin{table}[htbp]
$\begin{array}{c||l|l||l|l}
& \text{Patterns} & & \text{Patterns} &\\
& \text{straddling} & \text{Position} & \text{straddling} & \text{Value}\\
\ \ k \ \ & \text{position }k & \text{pairs at }k & \text{value }k & \text{pairs at }k\\
\hline
\hline
\raisebox{0in}[.2in][.1in]{$1$} & 521,\ 531,\ 541 & (1,6) 
& 521,\ 531,\ 541, & (5,1),\ (7,1) \\
& & & 731,\ 741 &\\
\hline
\raisebox{0in}[.2in][.1in]{$2$} & 521,\ 531,\ 541 & (1,6) 
& 521,\ 531,\ 541, & (5,1),\ (7,1) \\
& & & 731,\ 741 &\\
\hline
\raisebox{0in}[.2in][.1in]{$3$} & 521,\ 531,\ 541 & (1,6),\ (3,6),\ (3,4)
& 521,\ 531,\ 541, & (5,1),\ (7,1)\\
& 731,\ 741,\ 5734 & & 731,\ 741\\
\hline
\raisebox{0in}[.2in][.1in]{$4$} & 521,\ 531,\ 541, & (1,6),\ (3,6)
& 521,\ 531,\ 541, & (5,1),\ (7,1),\ (7,3)\\
& 731,\ 741 & & 731,\ 741,\ 5734\\
\hline
\raisebox{0in}[.2in][.1in]{$5$} & 521,\ 531,\ 541, &(1,6),\ (3,6)
& 731,\ 741 & (7,1)\\
& 731,\ 741 & & &\\
\hline
\raisebox{0in}[.2in][.1in]{$6$} & - & -
& 731,\ 741 & (7,1)\\
\end{array}$
\caption{The straddling patterns and straddling pairs that appear in $5273416$.}\label{table:straddling example}
\end{table}

In \cite{tenner repetition}, we counted $321$ and $3412$ patterns in $w$. Here, not surprisingly, we want to be more specific in terms of straddling.

\begin{definition}
Let
$$\pos_k(w)$$
count the straddling position pairs at $k$ in $w$, and
$$\val_k(w)$$
count the straddling value pairs at $k$ in $w$.
\end{definition}

\begin{example}
Continuing the example $w = 5273416$, we can compute $\pos_k(w)$ and $\val_k(w)$ as shown in Table~\ref{ex:pair counts}.
\end{example}
\begin{table}[htbp]
$\begin{array}{c|c|c}
\raisebox{0in}[.2in][.1in]{\ $k$ \ } & \pos_k(w) & \val_k(w)\\
\hline
\hline
\raisebox{0in}[.2in][.1in]{$1$} & 1 & 2\\
\hline
\raisebox{0in}[.2in][.1in]{$2$} & 1 & 2\\
\hline
\raisebox{0in}[.2in][.1in]{$3$} & 3 & 2\\
\hline
\raisebox{0in}[.2in][.1in]{$4$} & 2 & 3\\
\hline
\raisebox{0in}[.2in][.1in]{$5$} & 2 & 1\\
\hline
\raisebox{0in}[.2in][.1in]{$6$} & 0 & 1
\end{array}$
\caption{Counting position pairs and value pairs in $5273416$.}\label{ex:pair counts}
\end{table}

\section{Minimal repetition}\label{sec:minimal}

In \cite[Lemma 2.8]{tenner repetition}, we showed, among other things, that $\min_k(w) \ge 1$ if and only if $\{w(1),\ldots,w(k)\} \neq \{1,\ldots, k\}$. This idea of \emph{expatriation}---that one of the smallest $k$ values has been moved out of the first $k$ positions or, equivalently, that one of the largest $n-k$ values has been moved into those first $k$ positions for the first time---can actually be used to understand $\min_k(w)$ entirely, not just to bound it.

\begin{definition}
The \emph{expatriation} measure of $w$ at $k$ is
\begin{align*}
\expat_k(w) &:=  \Big| \big\{w(1),\ldots,w(k)\big\} \cap \big\{k+1,\ldots, n\big\} \Big|\\
&\phantom{:}= \Big| \big\{w(k+1),\ldots,w(n)\big\} \cap \big\{1,\ldots, k\big\} \Big|.
\end{align*}
\end{definition}

The following result shows an interaction between the positions and values in a permutation, nicely echoing the symmetry of these features that one often exploits in permutation analysis. This symmetry will reappear in the bound for maximal repetition, appearing in Theorem~\ref{thm:maximal}.

\begin{theorem}\label{thm:minimal}
Fix $w \in \symm_n$ and $k \in [1,n-1]$. Then
$$\min\nolimits_k(w) = \expat_k(w).$$
\end{theorem}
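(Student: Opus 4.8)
The plan is to prove the two inequalities $\min\nolimits_k(w)\ge \expat_k(w)$ and $\min\nolimits_k(w)\le \expat_k(w)$ separately. The lower bound will hold for \emph{every} reduced decomposition and comes from a monovariant argument; the upper bound only requires exhibiting one economical decomposition, for which I would pass to the parabolic factorization of $w$.

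For the lower bound, fix any reduced decomposition $w=\s_{c_1}\cdots\s_{c_\ell}$ and track the partial products $u_t=\s_{c_1}\cdots\s_{c_t}$, so that $u_0$ is the identity, $u_\ell=w$, and $\ell(u_t)=t$. Let $N_t:=\expat_k(u_t)=\big|\{i\le k: u_t(i)>k\}\big|$ count the large values sitting in the first $k$ positions of $u_t$. Passing from $u_{t-1}$ to $u_t$ swaps the values in positions $c_t$ and $c_t+1$. If $c_t\ne k$ this swap takes place entirely within $\{1,\dots,k\}$ or entirely within $\{k+1,\dots,n\}$, so it cannot change which values exceeding $k$ lie in the first $k$ positions, and $N_t=N_{t-1}$. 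If $c_t=k$, the swap exchanges the contents of positions $k$ and $k+1$, so $N_t-N_{t-1}=[u_t(k)>k]-[u_{t-1}(k)>k]\in\{-1,0,1\}$. Since $N_0=0$ and $N_\ell=\expat_k(w)$, and only the $\s_k$-steps can change $N$ and then by at most one, the number of indices $t$ with $c_t=k$ is at least $\expat_k(w)$. As the decomposition was arbitrary, $\min\nolimits_k(w)\ge\expat_k(w)$.

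For the upper bound it suffices to produce a single reduced decomposition of $w$ using $\s_k$ exactly $\expat_k(w)$ times. Here I would use the parabolic subgroup $W_J=\langle \s_i : i\ne k\rangle\cong\symm_k\times\symm_{n-k}$ together with the standard length-additive factorization $w=w^Jw_J$, where $w_J\in W_J$ and $w^J$ is the minimal-length representative of $wW_J$. Concatenating reduced words for $w^J$ and $w_J$ yields a reduced word for $w$, and since $w_J\in W_J$ contributes no $\s_k$, the $\s_k$-count of $w$ equals that of $w^J$. Because right multiplication by $w_J$ only permutes positions within each block, $w$ and $w^J$ share the same set of values in their first $k$ positions, whence $\expat_k(w^J)=\expat_k(w)$. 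The representative $w^J$ is Grassmannian: it is increasing on $\{1,\dots,k\}$ and on $\{k+1,\dots,n\}$, with its only possible descent at $k$.

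The remaining, and main, task is therefore to show that a Grassmannian permutation admits a reduced decomposition with exactly $\expat_k$ occurrences of $\s_k$, and I expect this to be the crux. One wants to route the $\expat_k(w^J)$ large values of the left block across the boundary into the right block one at a time, each crossing contributing a single $\s_k$, while the monotonicity of $w^J$ on each block guarantees that no value ever needs to cross back and that the resulting word is reduced. This can be argued either from the classical correspondence between reduced words of a Grassmannian permutation and standard Young tableaux of its associated shape---under which every reduced word uses $\s_k$ the same number of times, namely $\expat_k(w^J)$---or from an explicit monotone wiring diagram that realizes exactly this routing. Combining the two inequalities yields $\min\nolimits_k(w)=\expat_k(w)$.
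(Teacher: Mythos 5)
Your proposal is correct in outline and, for the most part, follows the same skeleton as the paper's proof; the one place it is not self-contained is the Grassmannian step at the end. Your lower bound is exactly the paper's argument: reading a product of simple reflections left to right, only an $\s_k$ factor can change the expatriation count, and by at most one, so every element of $R(w)$ contains at least $\expat_k(w)$ copies of $\s_k$; your partial-product monovariant is just a more detailed write-up of this. For the upper bound, the paper also works, in effect, with the parabolic factorization for the subgroup generated by all $\s_i$ with $i \neq k$ --- a remark immediately after its proof says precisely that its construction realizes $w = w^J w_J$ --- but it finishes with a short explicit construction rather than a citation: writing $m := \expat_k(w)$, it produces $w = u\,v\,t_0 t_1 \cdots t_{m-1}\,d$, where $u$, $v$, $d$ have no $\s_k$ in their supports and each staircase factor $t_i = \s_{k+i}\s_{k+i-1}\cdots\s_{k+i-(m-1)}$ contains $\s_k$ exactly once. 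You instead reduce to the Grassmannian representative $w^J$ and appeal to the reduced-words-to-standard-Young-tableaux correspondence. That route is legitimate: Grassmannian permutations are $321$-avoiding, hence fully commutative, so all of their reduced words carry the same multiset of letters, and the multiplicity of $\s_k$ is the number of diagonal cells of the associated shape, i.e.\ the Durfee square size, which equals $\expat_k(w)$ --- a fact the paper itself records in the remark after its proof. But as written, this crux is asserted rather than proved, and it invokes more machinery than needed: since your lower bound already gives ``at least,'' you only need to exhibit \emph{one} reduced word of $w^J$ with exactly $\expat_k(w)$ occurrences of $\s_k$, which is what the paper's factors $u$, $v$, $t_0,\ldots,t_{m-1}$ accomplish in a few lines. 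To complete your write-up, either carry out that explicit word (your ``monotone wiring diagram'') or supply the content computation underlying the tableau correspondence.
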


\begin{proof}
The set $X_k := \{w(1),\ldots,w(k)\} \cap \{k+1,\ldots, n\}$ describes the ``large'' values (greater than $k$) that have crossed into ``small'' positions (less than or equal to $k$). In any product of simple reflections, read from left to right, this kind of expatriation requires the reflection $\s_k$. Moreover, an individual $\s_k$ can only increase the expatriation by $1$, so
$$\min\nolimits_k(w) \ge |X_k| = \expat_k(w).$$

To show inequality in the other direction, we demonstrate a reduced decomposition of $w$ having exactly $|X_k|$ copies of $\s_k$. Set $Y_k := \{w(k+1),\ldots,w(n)\} \cap \{1,\ldots, k\}$. Let $u \in \symm_n$ be the permutation that puts the letters of $Y_k$, in increasing order, immediately to the right of the letters $\{1,\ldots,k\} \setminus \{w(k+1),\ldots,w(n)\}$. Let $v \in \symm_n$ be the permutation that puts the letters of $X_k$, in increasing order, immediately to the left of the letters $\{k+1,\ldots,n\}\setminus\{w(1),\ldots,w(k)\}$. Note that $\s_k \not\in \supp(u) \cup \supp(v)$.

For $i \in [0,|X_k|-1]$, define the permutation
$$t_i := \s_{k+i}\s_{k+i-1}\cdots\s_{k+i-(|X_k|-1)},$$
which is, in fact, given as a reduced decomposition. The permutation
$$uvt_0t_1\cdots t_{|X_k|-1}$$
differs from $w$ only the ordering of its first $k$ elements and, separately, of its last $n-k$ elements. Let $d$ be the permutation acting on positions $\{1,\ldots, k\}$ and, separately, $\{k+1,\ldots,n\}$ so that
$$uvt_0t_1\cdots t_{|X_k|-1}d = w.$$
By \cite[Lemma 2.8]{tenner repetition}, we have $\s_k \not\in \supp(u) \cup \supp(v) \cup \supp(d)$. On the other hand, $\s_k$ appears exactly once in $t_i$ for each $i \in [0,|X_k| - 1]$. Thus $\min_k(w) \le |X_k| = \expat_k(w)$, completing the proof.
\end{proof}

By construction, the proof of Theorem~\ref{thm:minimal} gives a parabolic decomposition of $w$, with $d \in W_J$ and $uvt_0t_1\cdots t_{|X_k|-1} \in W^J$, for the set $J$ of all generators except $\s_k$. Moreover, the size of the Durfee square of the partition defined by the Lehmer code of that minimal coset representative is, in fact, the amount of expatriation at $k$ in $w$.

We demonstrate the construction in the proof of Theorem~\ref{thm:minimal} with an example.

\begin{example}
Consider $w = 5273416$ and $k = 4$. Then $\expat_4(w) = 2$, with $X_4 = \{5,7\}$ and $Y_4 = \{1,4\}$. As described in the proof of Theorem~\ref{thm:minimal}, we find permutations
$$u = 2314567 \text{ \ and \ } v = 1234576,$$
as well as
$$t_0 = \s_4\s_3 \text{ \ and \ } t_1 = \s_5 \s_4.$$
This produces the permutation
$$uvt_0t_1 = 2357146.$$
Thus $d = 3142657$, from which we get
$$(uvt_0t_1)d = w.$$
We have $\s_4 \not\in \supp(u)\cup\supp(v)\cup\supp(d)$, while $\s_4$ appears once in each of $t_0$ and $t_1$. Therefore $\min_4(w) \le 2$. Because $|X_4| = 2 \le \min_4(w)$, this gives $\min_4(w) = 2$.
\end{example}

\section{Straddling patterns}\label{sec:straddling}

We now establish the relevance of straddling pairs by showing how the appearance of repeated factors in reduced decompositions can start to affect the straddling pairs in the permutation.

\begin{lemma}\label{lem:max=1}
If $\max_k(w) = 1$, then $w$ has no $3412$-pattern straddling $k$ in position or in value, and no $321$-pattern straddling $k$ in both position and value.
\end{lemma}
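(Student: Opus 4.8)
The plan is to prove the contrapositive: assuming $w$ has a $3412$-pattern straddling $k$ in position, a $3412$-pattern straddling $k$ in value, or a $321$-pattern straddling $k$ in both position and value, I will produce a reduced decomposition of $w$ using $\sigma_k$ at least twice, so that $\max_k(w) \ge 2 \neq 1$. Two tools drive everything. First, Theorem~\ref{thm:minimal} gives $\max_k(w) \ge \min_k(w) = \expat_k(w)$, so in many cases it suffices to show $\expat_k(w) \ge 2$. Second, reversing a word is a bijection $R(w) \to R(w^{-1})$ preserving the multiset of letters, so $\max_k(w) = \max_k(w^{-1})$ and $\expat_k(w) = \expat_k(w^{-1})$; since passing to $w^{-1}$ interchanges positions and values and the patterns $321,3412$ are each their own inverse, this lets me trade straddling in value for straddling in position.

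For the $3412$-patterns I claim $\expat_k(w) \ge 2$ outright. Suppose first the pattern straddles $k$ in position, say in positions $i_1 < i_2 \le k < i_3 < i_4$ with $w(i_3) < w(i_4) < w(i_1) < w(i_2)$. If $w(i_4) \le k$, then $w(i_3) < w(i_4) \le k$ too, so the two values $w(i_3),w(i_4)$ lie in $\{1,\dots,k\}$ yet occupy positions $i_3,i_4 > k$, whence $\expat_k(w) \ge 2$. If instead $w(i_4) > k$, then $w(i_1),w(i_2) > w(i_4) > k$, and these two values exceeding $k$ occupy positions $i_1,i_2 \le k$, again giving $\expat_k(w) \ge 2$. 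A $3412$-pattern straddling $k$ in value becomes, in $w^{-1}$, a $3412$-pattern straddling $k$ in position, so the preceding computation together with $\expat_k(w) = \expat_k(w^{-1})$ disposes of that case as well. In every $3412$-case $\max_k(w) \ge \expat_k(w) \ge 2$.

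Now suppose $w$ has a $321$-pattern straddling $k$ in both position and value: values $a>b>c$ in positions $p<q<r$ with $p \le k < r$ and $c \le k < a$. Already the value $a>k$ sits in position $p\le k$ and the value $c \le k$ sits in position $r>k$. I split on the middle entry. If the value $b$ and the position $q$ lie on opposite sides of $k$, then expatriation is forced up to $2$: if $b>k$ while $q\le k$ then $a,b$ are two values exceeding $k$ in positions at most $k$, while if $b \le k$ while $q>k$ then $b,c$ are two values at most $k$ in positions exceeding $k$; either way $\expat_k(w)\ge 2$ and we conclude as before.

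The remaining, and genuinely harder, case is when $b$ and $q$ lie on the same side of $k$ (both $>k$, or both $\le k$); this case is unavoidable, since $w = 321$ at $k=1$ has $\expat_1 = 1$ yet $\max_1 = 2$, so it cannot be reduced to expatriation. Here I will build a reduced word in which the middle strand of the pattern crosses the boundary between positions $k$ and $k+1$ twice, contributing two copies of $\sigma_k$. Viewing a reduced word as a sorting of $w$ to the identity by adjacent transpositions, I first use moves internal to the blocks $[1,k]$ and $[k+1,n]$ (which never use $\sigma_k$) to bring the three pattern strands adjacent to the boundary, and then braid them so that the $b$-strand dips across the boundary by crossing the $c$-strand and returns by crossing the $a$-strand, realizing a factor $\sigma_k\sigma_{k\pm1}\sigma_k$. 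I expect the main obstacle to be verifying that this routing extends to a full \emph{reduced} (length-additive) decomposition of $w$: one must check that the three pattern crossings together with the auxiliary within-block sorting admit a linear order in which no pair of strands crosses twice. I anticipate handling this exactly as in the proof of Theorem~\ref{thm:minimal}, through an explicit parabolic-style factorization into a within-block sort, a boundary braid, and a within-block sort, with $\sigma_k$ occurring precisely inside the braid.
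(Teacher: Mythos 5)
Your treatment of the two $3412$ cases and of the ``opposite sides'' $321$ subcase is correct and complete: counting expatriated values, invoking $\max_k(w)\ge\min_k(w)=\expat_k(w)$ from Theorem~\ref{thm:minimal}, and using the reversal bijection $R(w)\to R(w^{-1})$ to trade value-straddling for position-straddling are all sound, and this part agrees in substance with the paper's own first step (the paper likewise rules out $3412$ via $\min_k(w)=\expat_k(w)=1$; your inverse trick is a tidy way to halve that casework). You are also right that the remaining case cannot be reduced to expatriation.

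The gap is precisely in that remaining case---the $321$-pattern $a>b>c$ straddling $k$ in both position and value with non-expatriated middle entry---and it is a genuine missing idea, not a deferred verification. Your plan fixes the strands $a,b,c$ and proposes to bring them to the boundary by length-reducing within-block moves and then braid them; but for the pattern you fixed, this routing can be impossible. Take $w=4231$ and $k=3$ with the pattern $a=4,\ b=2,\ c=1$ in positions $1<2<4$ (middle entry on the same side, $\expat_3(w)=1$, so this is your hard case). The pair $(2,3)$ is not an inversion of $w$, so the strands carrying the values $2$ and $3$ never cross in any reduced wiring diagram of $w$; hence the value $3$ stays to the right of the value $2$ forever, and the strands $4,2,1$ can never simultaneously occupy positions $2,3,4$ (and positions $3,4,5$ do not exist since $n=4$). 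So no reduced word of $w$ contains a braid $\s_3\s_2\s_3$ formed by \emph{these} three strands: the ``linear order in which no pair of strands crosses twice'' that you hope to arrange does not exist for this choice. What saves the argument---and what the paper's proof does---is to \emph{re-choose the middle letter}: replace $b$ by the rightmost letter $z$ among the first $k$ positions that exceeds $c$ (here $z=3$, giving the pattern $431$). That choice forces every letter to the right of $z$ in the first $k$ positions to be smaller than $c$, and every letter preceding $c$ in the last $n-k$ positions to be larger than $c$, which is exactly what makes all the rearranging moves length-reducing; one then gets an explicit factorization $w=vu$ with $\ell(w)=\ell(v)+\ell(u)$ where $v$ has a $321$ in consecutive positions $k-1,k,k+1$, hence a reduced word containing $\s_k\s_{k-1}\s_k$. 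Your appeal to the proof of Theorem~\ref{thm:minimal} cannot substitute for this: that proof builds a coset-style factorization minimizing the number of $\s_k$'s and contains no mechanism for producing a braid. As written, your proposal proves the lemma except in exactly the case that carries its content.
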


\begin{proof}
If $\max_k(w) = 1$, then $\min_k(w) = 1$ as well. Thus, by Theorem~\ref{thm:minimal}, there is exactly one expatriated value in the first $k$ positions of $w$, and one in the last $n-k$ positions. Thus there can be no $3412$-pattern straddling $k$ in position or value. Let these expatriated values be $w^+$ and $w^-$, respectively. Because $w^+$ and $w^-$ are the lone expatriated values relative to $k$, there is a $321$-pattern in $w$ straddling $k$ in both position and value if and only if $w^+$ and $w^-$ are both part of that pattern (and, in fact, the largest and smallest values in it).

Suppose that there is such a pattern. Let $z$ be the central letter in the pattern, and suppose, without loss of generality, that $z \in \{w(1),\ldots,w(k)\}$. If necessary, redefine $z$ so that it is the rightmost letter in $\{w(1),\ldots,w(k)\}$ that is greater than $w^-$.

Using techniques as in \cite{tenner rdpp, tenner rwm}, we will multiply $w$ on the right by simple reflections, always shortening the length, to obtain $v$ in which there is a $321$-pattern now in positions $k-1 < k< k+1$ (in the case $w^{-1}(z) > k$, we would find the pattern in positions $k < k+1< k+2$). More precisely, write $w$ in one-line notation as
$$w = \ub{\cdots \phantom{j}\! w^+ \ A \ z \ B}_k \ \ub{C \ w^- \phantom{j}\! \cdots}_{n-k}.$$
All letters of $A$ are less than $w^+$, all letters of $B$ are less than $w^-$, and all letters of $C$ are greater than $w^-$. Thus $w = vu$ where $\ell(w) = \ell(v) + \ell(u)$ and
$$v = \ub{\cdots \phantom{j}\! A \ B \ w^+ \ z}_k \ \ub{w^- \ C \phantom{j}\! \cdots}_{n-k}.$$
This $v$ has a $321$-pattern in the consecutive positions $k-1, k, k+1$, and so it has reduced decompositions with the factor $\s_k\s_{k-1}\s_k$, and others that differ only by changing that factor to $\s_{k-1}\s_k\s_{k-1}$. Therefore $w$ has such reduced decompositions as well, and hence $\max_k(w) \ge 2$, a contradiction.

Therefore, no $321$-pattern in $w$ uses both $w^+$ and $w^-$, and so $w$ has no $321$-pattern straddling $k$ in both position and value.
\end{proof}

\begin{example}
Let $w = 621354$. The reduced decompositions of $w$, such as $\s_4\s_5\s_4\s_3\s_2\s_1\s_2$, each contain exactly one $\s_3$ factor. There are no $3412$-patterns in $w$. The only $321$-patterns in $w$ are $621$ and $654$. The former straddles $3$ in value, but not position. The latter straddles $3$ in position, but not value.
\end{example}

In fact, Lemma~\ref{lem:max=1} hints at the difference between having one $\s_k$ factor in a reduced decomposition, and having more than one factor.

\begin{theorem}\label{thm:max>1}
Fix $w \in \symm_n$ and $k \in [1,n-1]$. Then $\max_k(w) > 1$ if and only if $w$ has a pattern that straddles $k$ in both position and value.
\end{theorem}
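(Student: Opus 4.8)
The statement has two directions, and one of them is already essentially in hand. The forward direction---that $\max_k(w) > 1$ forces a pattern straddling $k$ in both position and value---is exactly the contrapositive of Lemma~\ref{lem:max=1} in the case $\min_k(w) = 1$, but I need to be careful. Lemma~\ref{lem:max=1} concludes that there is no $3412$-pattern straddling in position or value and no $321$-pattern straddling in \emph{both}. So if $w$ has \emph{no} pattern straddling $k$ in both position and value, I must still rule out $\max_k(w) > 1$. The plan is to observe that a $3412$-pattern straddling $k$ in position automatically straddles in value as well (and vice versa): in a $3412$-occurrence at positions $i_1 < i_2 \le k < i_3 < i_4$, the value pair is $(j_4, j_1)$ where $j_4 = w(i_2)$ is the largest value and $j_1 = w(i_3)$ is the smallest; checking the inequalities $j_1 \le k < j_4$ against the position constraints should show position-straddling and value-straddling coincide for $3412$. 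Thus ``straddles in both position and value'' is equivalent, for $3412$, to ``straddles at all,'' and for $321$ it is the genuine conjunction. With that clarification, Lemma~\ref{lem:max=1} gives precisely: if no pattern straddles in both, then $\max_k(w) = 1$ (assuming $\max_k(w) \ge 1$; if $\min_k(w) = 0$ the statement needs separate attention, but then $\expat_k(w) = 0$ and no value is expatriated, so no straddling pattern exists and $\max_k(w) = 0 \not> 1$ consistently).

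For the reverse direction---a pattern straddling $k$ in both position and value implies $\max_k(w) > 1$---I would reuse and sharpen the constructive argument at the end of Lemma~\ref{lem:max=1}. There, starting from a $321$-pattern straddling in both position and value, the author multiplies $w$ on the right by length-decreasing simple reflections to reach a permutation $v$ with a $321$-pattern in \emph{consecutive} positions $k-1, k, k+1$ (or $k, k{+}1, k{+}2$), which then admits the braid factor $\s_k \s_{k-1} \s_k$ interchangeable with $\s_{k-1}\s_k\s_{k-1}$, yielding two distinct $\s_k$-counts. The plan is to carry out the analogous rearrangement for a $3412$-pattern straddling $k$ in both position and value: sort the letters below the threshold to the left and above it to the right so that the four pattern-entries land in positions $k-1, k, k+1, k+2$ (or a similarly consecutive window around $k$), producing a local $3412$ configuration. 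A consecutive $3412$ has reduced decompositions $\s_k\s_{k+1}\s_{k-1}\s_k$ and its braid-equivalent variants that differ in the number of $\s_k$ factors, giving $\max_k(w) \ge 2$.

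The main obstacle, I expect, is the $3412$ case of the reverse direction: verifying that the length-additive right-multiplication genuinely collapses the straddling $3412$-pattern into four consecutive positions straddling $k$, and that the resulting local pattern admits two reduced decompositions with differing $\s_k$-multiplicity. For $321$ the local braid $\s_k\s_{k-1}\s_k \leftrightarrow \s_{k-1}\s_k\s_{k-1}$ changes the $\s_k$-count from $2$ to $1$ cleanly; for $3412$ I would need to exhibit an explicit pair of reduced words for the consecutive pattern $\s_{k-1}\s_k\s_{k+1}\s_{k-1}\s_k$ (the longest element of the relevant parabolic piece) whose $\s_k$-counts differ, likely via the standard commutation and braid relations among $\s_{k-1}, \s_k, \s_{k+1}$. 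The cleanest route may be to handle both pattern types uniformly by noting that any straddling-in-both pattern supplies, after sorting, a rank-two parabolic subword on $\{\s_{k-1}, \s_k\}$ or $\{\s_{k-1},\s_k,\s_{k+1}\}$ containing a braid move involving $\s_k$, and then invoke that a single braid move involving $\s_k$ changes the $\s_k$-count by exactly one. Establishing that the sorting is length-additive and preserves the straddling structure is the delicate bookkeeping I would need to write out carefully.
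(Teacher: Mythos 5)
There is a genuine gap, and it sits in the direction you believed was ``already essentially in hand.'' Lemma~\ref{lem:max=1} states: \emph{if} $\max_k(w) = 1$, \emph{then} $w$ has no pattern straddling $k$ in both position and value. Together with the $\max_k(w)=0$ case, the contrapositive of this yields the ``if'' direction of the theorem (a straddling-in-both pattern forces $\max_k(w) \neq 0$ and $\neq 1$, hence $>1$) --- and that is exactly, and only, what the paper uses the lemma for. What you attribute to the lemma, namely ``if no pattern straddles in both, then $\max_k(w) = 1$,'' is the lemma's \emph{converse}, and no reorganization of its statement produces it. That converse is precisely the hard ``only if'' direction of the theorem, and the paper proves it by a separate induction on length: reading a reduced decomposition left to right, each multiplication by $\s_h$ with $h \neq k$ is shown to preserve existing straddling pairs (via a pair-tracking table), and each $\s_k$ beyond the first is shown, by a case analysis on $v(k)$, $v(k+1)$ against the expatriated values, to create a $321$- or $3412$-pattern straddling $k$ in both position and value. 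Your proposal contains no substitute for this argument.

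The auxiliary claim you use to massage the lemma's conclusion is also false: a $3412$-pattern straddling $k$ in position need \emph{not} straddle $k$ in value. The paper's own Table~\ref{table:straddling example} for $w = 5273416$ is a counterexample: the pattern $5734$ straddles position $3$ (position pair $(3,4)$) but straddles value $4$, not value $3$; the two conditions involve disjoint data ($i_2 \le k < i_3$ for positions versus $j_2 \le k < j_3$ for values) and are genuinely independent. Finally, the constructive work you propose for the reverse direction --- collapsing a straddling $3412$ to consecutive positions and exhibiting reduced words for it with differing $\s_k$-counts --- is unnecessary: that direction already follows from Lemma~\ref{lem:max=1} together with the $\max_k(w)=0$ case, with no new construction. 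So the effort in your proposal is spent on the direction that comes for free, while the direction that actually needs a proof is missing.
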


\begin{proof}
If $\max_k(w) = 0$ then \cite[Lemma 2.8]{tenner repetition} says that $w$ can have no such pattern. The case $\max_k(w) = 1$ was handled in Lemma~\ref{lem:max=1}.

Now suppose that $\max_k(w) > 1$. We want to find such a pattern straddling $k$ in $w$. We induct on the length of $w$, reading a reduced decomposition from left to right and showing that whenever it is multiplied by some $\s_h$, we can identify such a pattern in the resulting product.

Consider, first, a permutation $v$ and a longer permutation $v' := v\s_k$, such that $\s_k \in \supp(v)$. By \cite[Lemma 2.8]{tenner repetition}, we can find $i$ and $j$ such that $i \le k < j$ and $v(i) > k \ge v(j)$. If at least one of the values $x \in \{v(k),v(k+1)\}$ satisfies $v(i) > x > v(j)$, then this is a $321$-pattern in $v'$ that straddles $k$ in both position and value. If that is not the case, then, because $\ell(v') > \ell(v)$, we have one of the following situations:
\begin{itemize}
\item $v(i) \le v(k) < v(k+1)$, in which case $v(k+1) > v(k) > v(j)$ is a $321$-pattern in $v'$ that straddles $k$ in both position and value; 
\item $v(j) \ge v(k+1) > v(k)$, in which case $v(i) > v(k+1) > v(k)$ is a $321$-pattern in $v'$ that straddles $k$ in both position and value; or
\item $v(i) < v(k+1)$ and $v(j) > v(k)$, in which case we have $v(k+1) > v(i) > v(j) > v(k)$, appearing in the order $v(i) v(k+1) v(k) v(j)$ in $v'$, which is a $3412$-pattern that straddles $k$ in both position and value.
\end{itemize}

Now suppose that we have read the reduced decomposition up to a certain point, producing a permutation $v$, with $\max_k(v) > 1$, and suppose that the next reflection is $\s_h$ for $h \neq k$. Suppose $\{x,y\} \cap \{h,h+1\} = \emptyset$. Then, up to symmetry, we can trace all position pairs from $v$ to $v' = v\s_h$ as follows.
\begin{center}
\begin{tabular}{lll}
\raisebox{0in}[.2in][.1in]{}Position pair in $v$ & & Position pair in $v'$\\
\hline
\hline
\raisebox{0in}[.2in][.1in]{}$(x,y)$ marking $321$ & $\mapsto$ & $(x,y)$ marking $321$\\
\hline
\raisebox{0in}[.2in][.1in]{}$(x,y)$ marking $3412$ & $\mapsto$ & $(x,y)$ marking $3412$\\
\hline
\raisebox{0in}[.2in][.1in]{}$(x,h)$ marking $321$ & $\mapsto$ & $(x,h+1)$ marking $321$\\
\hline
\raisebox{0in}[.2in][.1in]{}$(x,h+1)$ marking $321$ & $\mapsto$ & $(x,h)$ marking $321$\\
\hline
\raisebox{0in}[.2in][0in]{}$(x,h)$ marking $3412$ with & $\mapsto$ & $(x,h+1)$ marking $3412$\\
\raisebox{0in}[0in][.1in]{}``$2$'' not in position $h+1$ && \\
\hline
\raisebox{0in}[.2in][0in]{}$(x,h)$ marking $3412$ with & $\mapsto$ & $(x,h+1)$ marking $321$\\
\raisebox{0in}[0in][.1in]{}``$2$'' in position $h+1$ && \\
\hline
\raisebox{0in}[.2in][.1in]{}$(x,h+1)$ marking $3412$ & $\mapsto$ & $(x,h)$ marking $3412$
\end{tabular}
\end{center}
Thus such a pair always persists in $v' = v\s_h$.
\end{proof}

\section{Maximal repetition}\label{sec:maximal}

Unfortunately, the maximal amount of repetition of $\s_k$ in any element of $R(w)$ cannot be characterized as nicely as Theorem~\ref{thm:minimal} did for the minimal amount. In particular, we achieve a sharp upper bound for $\max_k(w)$ instead of an exact equality. Perhaps this should not be surprising, given that the main result of \cite{tenner repetition} was a sharp bound and not a strict equality.

We can now bound $\max_k(w)$. This is a tighter bound than what one might have guessed from \cite{tenner repetition}; namely, the bound is in terms of straddling \emph{pairs} at $k$, not straddling \emph{patterns}.

\begin{theorem}\label{thm:maximal}
Fix $w \in \symm_n$ and $k \in [1,n-1]$. Then
\begin{equation}\label{eqn:max}
\max\nolimits_k(w) \le \min\big\{\val_k(w),\pos_k(w)\big\} + 1.
\end{equation}
\end{theorem}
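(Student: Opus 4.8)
The plan is to bound $\max_k(w)$ by separately bounding the number of $\s_k$ factors against $\val_k(w)+1$ and against $\pos_k(w)+1$, and then taking the minimum; by the position/value symmetry already exploited in Theorem~\ref{thm:minimal}, the two bounds are dual, so it suffices to prove one, say $\max_k(w) \le \pos_k(w)+1$. I would fix a reduced decomposition $\s_{a_1}\cdots\s_{a_\ell} \in R(w)$ realizing the maximum number of $\s_k$ factors, and read it from left to right, tracking the permutation $v^{(m)} := \s_{a_1}\cdots\s_{a_m}$ at each step. The goal is to assign to each occurrence of $\s_k$ (after the first) a distinct straddling position pair of $w$, which would give at most $\pos_k(w)$ repeats and hence at most $\pos_k(w)+1$ occurrences by Equation~\eqref{eqn:occurrences = repeats + 1}.

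The engine for this is the tracking table at the end of the proof of Theorem~\ref{thm:max>1}, which shows how a position pair evolves under right-multiplication by $\s_h$ with $h \ne k$: pairs persist (possibly shifting an endpoint between $h$ and $h+1$, or converting between marking a $321$ and a $3412$). The first step is to record the \emph{creation} side of this analysis: when we multiply by $\s_k$ itself and thereby introduce a new repeat of $\s_k$, the case analysis in Theorem~\ref{thm:max>1} produces a straddling pattern, and I would pin down exactly which \emph{position pair} $(x,y)$ with $x \le k < y$ is newly marked at that step. The crux is then to argue that distinct $\s_k$-repeats give rise to distinct position pairs that survive all the way to $w$: each intermediate $\s_h$ ($h\ne k$) only transports an existing pair along the table, never destroying it, and the pairs created at different $\s_k$ steps remain distinguishable because they are anchored to genuinely different extreme letters. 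Summing over the repeats, each is charged to its own surviving position pair of $w$, yielding $\#\{\text{repeats of }\s_k\} \le \pos_k(w)$.

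I would then invoke symmetry to get the companion inequality $\max_k(w)\le\val_k(w)+1$ for free: passing from positions to values (equivalently, replacing $w$ by $w^{-1}$, under which $\s_k$-counts in reduced decompositions are preserved and straddling-in-position and straddling-in-value swap roles) turns the position-pair bound into the value-pair bound. Combining the two gives $\max_k(w) \le \min\{\val_k(w),\pos_k(w)\}+1$, which is Equation~\eqref{eqn:max}.

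The main obstacle I anticipate is the injectivity/distinctness of the charging map: it is not enough that each new $\s_k$-repeat produces \emph{some} straddling pair, nor merely that pairs persist individually — I must ensure that two different repeats are not charged to the same final position pair of $w$, and that a pair created at a later $\s_k$ step was not already counted earlier (possibly having merely shifted an endpoint under an intervening $\s_h$). Making this bookkeeping precise — showing the assignment ``repeat $\mapsto$ surviving position pair'' is well-defined and injective under the combined transport rules of Theorem~\ref{thm:max>1} and the creation step — is where the real work lies; the symmetry reduction and the final combination are routine once that is in hand.
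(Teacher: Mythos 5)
Your outline follows the paper's proof closely in structure: read a reduced decomposition left to right, use the transport table from the proof of Theorem~\ref{thm:max>1} for steps $\s_h$ with $h \neq k$, produce a new straddling position pair at each $\s_k$ step after the first, and get the value-pair bound by symmetry (your $w^{-1}$ reduction is a valid way to make the ``symmetric argument'' precise, since $R(w^{-1})$ consists of the reversed words of $R(w)$ and inversion exchanges position pairs with value pairs). However, the two places you defer are exactly where the content of the proof lies, and one of them your plan as written has no mechanism to handle. First, the creation step: the case analysis in Theorem~\ref{thm:max>1} shows only that $v' = v\s_k$ contains \emph{some} pattern straddling $k$; it does not show that the pair marking it is \emph{new}, i.e.\ not already a position pair of $v$. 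The paper runs a different, sharper case analysis: if $v(k+1) < v(i)$ it chooses $a < k$ \emph{minimal} with $v(a) > v(k+1)$, so that $(a,k+1)$ could not have been a position pair of $v$; symmetrically it chooses $b > k+1$ \emph{maximal} with $v(b) < v(k)$; and in the remaining case the new pair is $(k,k+1)$, which cannot have been a pair of $v$ because $v(k) < v(k+1)$ in a reduced product. These extremal choices are the missing idea; ``anchored to genuinely different extreme letters'' is not a substitute for them.

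Second, and more structurally: your charging map needs pairs created at earlier $\s_k$ steps to survive \emph{later $\s_k$ steps}, but your only survival engine is the table for $h \neq k$. You would need the additional fact, which the paper proves using Remark~\ref{rem:straddling pair marks extremes} together with $v'(k) > v(k)$ and $v'(k+1) < v(k+1)$, that every existing position pair of $v$ remains a position pair of $v\s_k$. Once you have that, note that your global injectivity worry evaporates if you replace the charging map by the paper's bookkeeping: maintain the invariant that the \emph{current} permutation has at least $t-1$ distinct position pairs, where $t$ is the number of $\s_k$'s read so far. Each $\s_h$ step transports pairs injectively (so the count cannot drop), and each $\s_k$ step preserves all existing pairs verbatim and adds one that is provably not among them (so the count strictly increases); at the end, $w$ itself has at least $\max_k(w)-1$ position pairs. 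Comparing the new pair only against the pairs of the current permutation---rather than against the history of previously charged pairs and their transported images---is what makes the distinctness argument local and finite, and it is precisely the step your proposal identifies as the crux but does not supply.
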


\begin{proof}
We will prove that the number of $\s_k$ factors in any reduced decomposition of $w$ is bounded by $\pos_k(w) + 1$. The argument for $\val_k(w) + 1$ is similar. Thus, we will have shown that the number of $\s_k$ factors in any reduced decomposition of $w$ is bounded by $\min\{\pos_k(w),\val_k(w)\} + 1$, and the result follows.

We prove this by induction on $\max_k(w)$. For the remainder of the proof, we will take ``position pair'' to mean ``position pair at $k$.''

Suppose, first, that $\max_k(w) = 1$. Since $\min\{\pos_k(w),\val_k(w)\} \ge 0$, the result is trivial. 

Now consider a permutation $w$ for which $\max_k(w) > 1$, and consider a reduced decomposition of $w$. As before, we will read the decomposition from left to right, showing that each $\s_k$ after the first one will produce at least one new position pair, and that adjusting the permutation by multiplying other simple reflections on the right will not reduce the number of these pairs.

Suppose, first, that we have read the reduced decomposition up to a certain point, producing a permutation $v$, and $v' = v\s_h$ for $h \neq k$. Then the table presented in the proof Theorem~\ref{thm:max>1} is again relevant, and the mapping from position pairs in $v$ to position pairs in $v'$, given by $(x,y) \mapsto (\s_h(x),\s_h(y))$, is injective.

Now suppose that we have read the reduced decomposition up to a certain point, producing a permutation $v$, and the next reflection is $\s_k$. Set $v' := v\s_k$. Suppose, inductively, that $\s_k$ appeared $t \ge 1$ times in $v$, and that we have identified at least $t-1$ distinct position pairs in $v$. Because $t \ge 1$, there are positions $i$ and $j$ with $i \le k < j$ such that $v(i) > k \ge v(j)$ (see Theorem~\ref{thm:minimal} and \cite[Lemma 2.8]{tenner repetition}). Because we are working with a reduced decomposition, we must have $\ell(v') > \ell(v)$, so $v(k) < v(k+1)$. Recall Remark~\ref{rem:straddling pair marks extremes}, and note that $v'(k) > v(k)$ and $v'(k+1) < v(k+1)$. Thus, for any position pair $(x,y)$ in $v$, even if $x = k$ or $y = k+1$, this $(x,y)$ will also be a position pair in $v'$.

It remains to show that $v'$ will have a position pair (perhaps more than one) that was not a position pair in $v$. There are three cases to consider.
\begin{itemize}
\item Suppose that $v(k+1) < v(i)$. Then $i < k$. Define $a < k$ to minimize $v(a) > v(k+1)$. Such an $a$ exists because $i < k$ and $v(i) > v(k+1)$. Then
$$\{v'(a) = v(a), v'(k) = v(k+1), v'(k+1) = v(k)\}$$
is an occurrence of $321$ in $v'$, and $(a,k+1)$ is a position pair in $v'$. This $(a,k+1)$ was not a position pair in $v$ because it could not have marked a $321$-pattern or a straddling $3412$-pattern due to the choice of $a$.
\item Similarly, if $v(k) > v(j)$, then $j > k+1$ and we define $b > k+1$ to maximize $v(b) < v(k)$. Then
$$\{v'(k) = v(k+1), v'(k+1) = v(k), v'(b) = v(b)\}$$
is an occurrence of $321$ in $v'$, and $(k,b)$ is a position pair in $v'$. This $(k,b)$ was not a position pair in $v$, due to the choice of $b$.
\item Finally, suppose $v(k+1) > v(i)$ and $v(k) < v(j)$. Then $i < k$ and $j > k+1$, and
$$\{v'(i) = v(i), v'(k) = v(k+1), v'(k+1) = v(k), v'(j) = v(j)\}$$
is an occurrence of $3412$ in $v'$, marked by position pair $(k,k+1)$ in $v'$. Certainly $(k,k+1)$ was not a position pair in $v$, because $v(k) < v(k+1)$.
\end{itemize}

Therefore, all $\s_k$ factors except for the first one will introduce at least one new position pair. Were we to read the reduced decomposition with $\max_k(w)$ copies of $\s_k$, we would find that there must be at least $\max_k(w) - 1$ position pairs in $w$, and so $\max_k(w) - 1 \le \pos_k(w)$, as desired.

A symmetric argument shows the $\max_k(w) - 1 \le \val_k(w)$, and the result follows.
\end{proof}

For some permutations and values of $k$, the bound given in Theorem~\ref{thm:maximal} is sharp. For others, it is not, and there are more straddling pairs than there are repeated appearances of the simple reflection. The ``excess'' straddling pairs are, perhaps, related to the main inequality in \cite{tenner repetition}, although much remains to be understood about this surplus.

\begin{example}\label{ex:max in 4312 and 4321}\
\begin{enumerate}
\item[(a)] Consider $w = 4312$ and $k = 1$. Then $\pos_1(w) = 2$ and $\val_1(w) =1$, so $\max_1(w) \le \min\{2,1\}+1 = 2$. Indeed, $\max_1(w) = 2$, as we saw in Example~\ref{ex:4312}.
\item[(b)] Consider $w = 4321$ and $k = 2$. Then $\pos_2(w) = \val_2(w) = 3$, and so $\max_2(w) \le 3+1 = 4$. In fact, $\max_2(w) = 3 < 4$.
\item[(c)] Returning to the permutation $w = 5273416$ and Table~\ref{ex:pair counts}, and we find excess only for $k=4$; that is, for this $w$, the weak inequality in~\eqref{eqn:max} is an equality if and only if $k = 4$.
\end{enumerate}
\end{example}

One might read Example~\ref{ex:max in 4312 and 4321}(b) and hope that the ten patterns given in \cite[Theorem 3.2]{tenner repetition} and listed at the beginning of this paper are the key to understanding this excess. However, that is not the case, as we see in the next example.

\begin{example}
For $w = 34512$ and for all $k \in [1,4]$, the weak inequality stated in~\eqref{eqn:max} is an equality.
\end{example}

The permutation $n(n-1)\cdots 321 \in \symm_n$ is particularly important, and the following corollary gives a range for the repetition of $\s_k$ in its reduced decompositions.

\begin{corollary}
Fix $n \ge 3$. Let $w_0 \in \symm_n$ be the longest element and fix $k \in [1,n-1]$. Then
$$\min\nolimits_k(w_0) = \min\{k,n-k\} \text{ \ and \ } \max\nolimits_k(w_0) \le k(n-k).$$
\end{corollary}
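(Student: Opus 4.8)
The plan is to derive the corollary as a direct application of the two main theorems already proved, namely Theorem~\ref{thm:minimal} for the minimum and Theorem~\ref{thm:maximal} for the bound on the maximum. Since $w_0 = n(n-1)\cdots 21$ is completely explicit, both $\expat_k(w_0)$ and the pair counts $\pos_k(w_0)$, $\val_k(w_0)$ can be computed by hand, so the corollary should follow without any new machinery.

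First I would compute the minimum. By Theorem~\ref{thm:minimal}, $\min_k(w_0) = \expat_k(w_0) = \big|\{w_0(1),\ldots,w_0(k)\} \cap \{k+1,\ldots,n\}\big|$. Since $w_0(i) = n+1-i$, the first $k$ positions of $w_0$ hold the values $\{n, n-1, \ldots, n-k+1\}$. Intersecting with $\{k+1,\ldots,n\}$ gives exactly those values among $\{n-k+1,\ldots,n\}$ that exceed $k$, i.e.\ the values $\max\{k+1,n-k+1\},\ldots,n$. A short case analysis on whether $k \le n-k$ or $k > n-k$ shows this count equals $\min\{k,n-k\}$, giving the first equality.

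Next I would bound the maximum. By Theorem~\ref{thm:maximal}, it suffices to show $\min\{\pos_k(w_0),\val_k(w_0)\} + 1 \le k(n-k)$, and in fact I expect the cleaner route is to bound $\pos_k(w_0)$ directly. Because $w_0$ avoids $3412$ (it is decreasing), every straddling pattern is a $321$, so a position pair at $k$ is determined by a pair $(i_1,i_3)$ with $i_1 \le k < i_3$ together with the existence of a middle index; since $w_0$ is strictly decreasing, \emph{every} triple $i_1 < i_2 < i_3$ is a $321$-pattern, so a pair $(i_1,i_3)$ marks a straddling pattern precisely when some $i_2$ lies strictly between them. The number of admissible position pairs $(i_1,i_3)$ with $i_1 \le k < i_3$ is at most $k(n-k)$, since there are $k$ choices for $i_1 \in [1,k]$ and $n-k$ choices for $i_3 \in [k+1,n]$. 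Thus $\pos_k(w_0) \le k(n-k)$, and similarly $\val_k(w_0) \le k(n-k)$ by the positions-versus-values symmetry, yielding $\max_k(w_0) \le \min\{\pos_k(w_0),\val_k(w_0)\} + 1 \le k(n-k)$.

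The only subtlety, and the step I would be most careful about, is the ``$+1$'' versus the target bound $k(n-k)$: the raw count of pairs $(i_1,i_3)$ is $k(n-k)$, but not all of them mark an actual pattern (the extreme pair $(k,k+1)$ has no room for a middle letter), so the genuine value of $\pos_k(w_0)$ is $k(n-k)-1$, and adding $1$ recovers exactly $k(n-k)$. I would verify this edge count explicitly — the pairs with no intervening index are precisely those with $i_3 = i_1 + 1$, and among pairs straddling $k$ the only such pair is $(k,k+1)$ — so that $\pos_k(w_0) = k(n-k) - 1$ and the bound $\max_k(w_0) \le k(n-k)$ is attained tightly through Theorem~\ref{thm:maximal}. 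This careful accounting of the boundary pair is where I expect the main (though modest) obstacle to lie.
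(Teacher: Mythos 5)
Your proposal is correct and follows essentially the same route as the paper: apply Theorem~\ref{thm:minimal} with $\expat_k(w_0)=\min\{k,n-k\}$, and apply Theorem~\ref{thm:maximal} after counting the straddling position pairs of the (3412-avoiding, all-triples-are-321) decreasing permutation. Your count $\pos_k(w_0)=k(n-k)-1$, obtained by excluding the single pair $(k,k+1)$ with no room for a middle letter, agrees exactly with the paper's expression $(k-1)(n-k)+n-k-1$.
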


\begin{proof}
The amount of expatriation at $k$ in $w_0$ is the smaller of $k$ and $n-k$. Thus, by Theorem~\ref{thm:minimal}, $\min_k(w_0) = \min\{k,n-k\}$. On the other hand, the number of position pairs (equivalently, of value pairs) at $k$ is $(k-1)(n-k) + n-k-1$. Thus, by Theorem~\ref{thm:maximal}, $\max_k(w_0) \le (k-1)(n-k) + n-k-1 + 1 = k(n-k)$.
\end{proof}

\section{Fixed repetition}\label{sec:fixed}

As we have seen in the previous sections, $\min_k(w)$ and $\max_k(w)$ behave quite differently. That being said, we can characterize when they coincide by recognizing that this coincidence means that every element of $R(w)$ must have the same number of $\s_k$ factors. In a way, this generalizes the results of Section~\ref{sec:straddling}.

\begin{theorem}\label{thm:fixed}
Fix $w \in \symm_n$ and $k \in [1,n-1]$. Then $\max_k(w) > \min_k(w)$ if and only if there exists a $321$-pattern straddling position $k$ in $w$, with position pair $(i,j)$, such that
$$L_w := \{w(q) > w(i) : i < q \le k\} \hspace{.25in} \text{and} \hspace{.25in} R_w := \{w(q) < w(j) : k < q < j\}$$
satisfy
\begin{itemize}
\item $|L_w| = |R_w|$ and
\item the elements of $L_w$ are in increasing order from left to right in $w$, as are those of $R_w$.
\end{itemize}
\end{theorem}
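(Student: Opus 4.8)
The plan is to first translate the inequality into a statement about braid factors and then match that against the local pattern condition. By Tits' theorem the reduced decompositions of $w$ are connected by commutation and braid moves. A commutation move, and any braid move among generators other than $\s_k$, leaves the number of $\s_k$ factors unchanged, whereas a braid move of the form $\s_k\s_{k-1}\s_k \leftrightarrow \s_{k-1}\s_k\s_{k-1}$ or $\s_{k+1}\s_k\s_{k+1}\leftrightarrow\s_k\s_{k+1}\s_k$ changes it by exactly one. Hence the number of $\s_k$ factors is constant across $R(w)$ precisely when no reduced decomposition of $w$ admits a braid move involving $\s_k$, so that
$$\max\nolimits_k(w) > \min\nolimits_k(w) \iff \text{some element of } R(w) \text{ contains a braid factor using } \s_k.$$
This is exactly the reformulation flagged in the introduction, and it is the form I would work with.

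Next I would reformulate the existence of such a braid factor geometrically. A braid factor using $\s_k$ is the longest element of the parabolic $S_3$ acting on the consecutive positions $k-1,k,k+1$ or $k,k+1,k+2$; peeling it off to the right, the existence of a reduced word containing it is equivalent to a length-additive factorization $w = vu$ with $u \in W_J$ for $J = \{\s_1,\dots,\s_{n-1}\}\setminus\{\s_k\}$, where $v$ has a $321$-pattern in those consecutive positions. Since $u \in W_J$ only permutes the positions $1,\dots,k$ among themselves and $k+1,\dots,n$ among themselves, passing from $w$ to such a $v$ amounts to re-sorting the two blocks by length-reducing within-block moves (taking weak-order factors of $w$ inside each block), while every comparison between a left-block position and a right-block position is preserved. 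The question thus becomes whether the two blocks can be so re-sorted that a $321$-pattern occupies consecutive positions straddling the $k \mid k+1$ divide.

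I would then prove that this re-sorting is feasible exactly when the special pattern is present. For the reverse implication I would argue constructively, in the spirit of Lemma~\ref{lem:max=1} and the reduced-word manipulations of \cite{tenner rdpp, tenner rwm}: starting from the witness pattern with position pair $(i,j)$, I would gather the flank letters recorded in $L_w$ and $R_w$ and slide them aside by length-reducing within-block moves. The hypothesis that each flank is increasing is what lets these letters be collected into monotone runs that can be moved without ever lengthening the word, and the balance $|L_w|=|R_w|$ is what allows the sort to finish with a decreasing triple in consecutive positions straddling $k$, yielding the required $v$. For the forward implication I would instead read a reduced word of $w$ containing a braid factor using $\s_k$ from left to right, as in the proofs of Theorems~\ref{thm:max>1} and~\ref{thm:maximal}: the prefix stopping just before the braid factor produces a permutation with a consecutive $321$ straddling $k$, and following its three strands through the remaining letters yields a $321$-pattern of $w$ straddling position $k$ with some position pair $(i,j)$. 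The letters those strands must cross between the triple and the divide are precisely what populate $L_w$ and $R_w$, and because the word is reduced these cannot harbor a wasted descent, which forces the two flanks to be increasing and of equal size.

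The step I expect to be the main obstacle is establishing this equivalence rigorously in both directions, that is, proving that the balance $|L_w| = |R_w|$ together with the monotonicity of each flank is not only sufficient but also necessary for the block re-sorting to expose a consecutive straddling $321$. The subtlety is that length-reducing within-block moves can only undo existing inversions, so the relative order of the large value $w(i)$ with each larger flank letter, and of the small value $w(j)$ with each smaller flank letter, is frozen throughout the sort; tracking how these frozen orders on the two sides of the divide must match up, and checking that every intended move genuinely decreases length, is where the argument is most delicate. I would manage it by carrying the inversions of the two blocks explicitly and appealing to the weak-order reachability of block arrangements, so that in each direction the equality of the flank sizes and their monotonicity emerge as the exact solvability conditions.
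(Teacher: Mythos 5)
Your opening reduction is fine and matches the paper's first line: by Tits' theorem, the number of $\s_k$ factors is constant on $R(w)$ exactly when no reduced decomposition contains a braid factor using $\s_k$. The genuine gap comes immediately after, in the ``peeling off'' step: your claim that a reduced word containing such a braid factor is equivalent to a length-additive factorization $w = vu$ with $u \in W_J$, $J = \{\s_1,\dots,\s_{n-1}\}\setminus\{\s_k\}$, and $v$ carrying a consecutive straddling $321$, is false, and with it the reduction to ``re-sorting the two blocks by within-block moves.'' Take $w = 5471326 \in \symm_7$ and $k = 3$. The pattern $5 > 4 > 3$ in positions $(1,2,5)$ has $L_w = \{7\}$ and $R_w = \{1\}$, balanced and (trivially) increasing, so the theorem gives $\max_3(w) > \min_3(w)$; indeed $w = vu$ length-additively with $v = 1543726$, which has the consecutive triple $5>4>3$ in positions $2,3,4$. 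But every length-additive factorization $w = vu$ with $u \in W_J$ has left block in $\{(5,4,7),(4,5,7)\}$ and right block in $\{(1,3,2,6),(1,2,3,6)\}$, and in all four candidates positions $(2,3)$ and $(4,5)$ are ascents, so no candidate has a $321$ in positions $(2,3,4)$ or $(3,4,5)$. The obstruction is structural: the flank letter $7 \in L_w$ is \emph{larger} than $x = 5$, so $x$ cannot hop over it by length-reducing moves inside the left block; it must be carried \emph{across} the divide. This is precisely why the paper's construction swaps the $L_w$-letters ($A_1 \cup B_1$) with the $R_w$-letters ($C_2$) across position $k$ --- putting $\s_k$'s into $u$ --- and why the balance $|L_w| = |R_w|$ matters: it makes the divide land immediately after $y$, exposing $x > y > z$ in positions $k-1, k, k+1$. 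So your reverse-direction sketch is repairable (it is the paper's construction once cross-divide moves are allowed), but your framing via $W_J$ cannot prove either direction.

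The forward direction has a second, independent gap: ``following the three strands'' of the consecutive triple through the remaining letters does not produce the witness. The suffix after the braid factor may contain further $\s_k$'s (your reformulation assumed these away), and even an $\s_h$ with $h \neq k$ that lengthens the word can swap two flank letters $l_t < l_{t+1}$, after which the tracked pattern's flank is no longer increasing --- the word remains perfectly reduced, so your assertion that reducedness ``forces the two flanks to be increasing and of equal size'' is false for a fixed tracked pattern. The paper's necessity argument works instead by induction with \emph{dynamic re-selection} of the witness: when flank letters swap, it abandons the old pattern for $l_{t+1} > l_t > r_{t+1}$ with truncated flanks, and when the next letter is $\s_k$ itself it runs a four-case analysis, several cases of which elect an entirely new triple with $L = R = \emptyset$. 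That maintenance argument is the actual content of the theorem, and your proposal defers it wholesale --- as you acknowledge in calling the equivalence ``the main obstacle'' --- while the one concrete mechanism you do propose for it (weak-order re-sorting within the parabolic $W_J$) is refuted by the example above.
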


\begin{proof}
Every element of $R(w)$ has a fixed number of $\s_k$ factors if and only if there is no factor $\s_k\s_{k\pm1}\s_k$ in any element of $R(w)$.

Throughout this proof, if $S$ is a set of real numbers and $r \in \mathbb{R}$, we write ``$S < r$'' (respectively, ``$S > r$'') to mean that all elements of $S$ are less than (resp., greater than) $r$.

First suppose that $w$ has a $321$-pattern as described in the statement of the theorem. Set $x := w(i)$ and $z := w(j)$. Without loss of generality, suppose that the middle value of this pattern occurs to the left of position $k+1$. Choose $h \le k$ to be maximal such that $z < w(h) < x$, and set $y := w(h)$. Thus we can write
$$w = \ub{\cdots \phantom{j}\! x \ A \ y \ B}_k \ \ub{C \ z \phantom{j}\! \cdots}_{n-k}.$$
For the remainder of the proof, we will use the underbrace to distinguish the first $k$ positions from the last $n-k$ positions, but will omit the labels ``$k$'' and ``$n-k$.'' By definition of $y$, we can partition the sets $A$, $B$, and $C$ as follows:
\begin{itemize}
\item $A = A_1 \cup A_2$, where $A_1 > x$ and $A_2 < x$,
\item $B = B_1 \cup B_2$, where $B_1 > x$ and $B_2 < z$, and
\item $C = C_1 \cup C_2$, where $C_1 > z$ and $C_2 < z$.
\end{itemize}
Thus $L_w = A_1 \cup B_1$ and $R_w = C_2$, and hence
\begin{equation}\label{eqn:set sizes in balance}
|A_1| + |B_1| = |C_2|.
\end{equation}

We once again employ the techniques used above, shortening the permutation $w$ in order to write $w = vu$, with $\ell(w) = \ell(v) + \ell(u)$, for which $v$ has some particularly useful form. We will write $w \rightsquigarrow v$ to indicate such a maneuver. Using the definitions of the sets $A_i$, $B_i$, and $C_i$ above, we take the following steps, in which ``$S$,'' for example, is understood to mean ``the elements of the set $S$, in the order in which they appear in $w$.''
\begin{align*}
w = \ \ub{\cdots \phantom{j}\! x \ A \ y \ B} \ \ub{C \ z \phantom{j}\! \cdots}
&\ \rightsquigarrow \ 
\ub{\cdots \phantom{j}\! x \ A_2\ A_1 \ y \ B_2\ B_1} \ \ub{C_2\ C_1 \ z \phantom{j}\! \cdots}\\
&\ \rightsquigarrow \ 
\ub{\cdots \phantom{j}\! A_2\ B_2 \ x \ A_1 \ y \ B_1} \ \ub{C_2 \ z \ C_1 \phantom{j}\! \cdots}\\
&\ \rightsquigarrow \ 
\ub{\cdots \phantom{j}\! A_2\ B_2 \ x\ y \ A_1\ B_1} \ \ub{C_2 \ z \ C_1 \phantom{j}\! \cdots}\\
&\ \rightsquigarrow \ 
\ub{\cdots \phantom{j}\! A_2\ B_2 \ x\ y \ C_2} \ \ub{A_1\ B_1 \ z \ C_1 \phantom{j}\! \cdots}\\
&\ \rightsquigarrow \ 
\ub{\cdots \phantom{j}\! A_2\ B_2 \ C_2 \ x\ y} \ \ub{z \ A_1\ B_1 \ C_1 \phantom{j}\! \cdots} \ =: v
\end{align*}
The position of the underbrace after the fourth transition in this list follows from Equation~\eqref{eqn:set sizes in balance}, meaning that $v$ has a $321$-pattern in positions $\{k-1,k,k+1\}$. Hence $v$ has reduced decompositions with the factor $\s_k \s_{k-1}\s_k$, and others that differ only by changing that factor to $\s_{k-1}\s_k\s_{k-1}$. Therefore $\max_k(v) > \min_k(v)$, and so $\max_k(w) > \min_k(w)$.

Now suppose that $\max_k(w) > \min_k(w)$. Without loss of generality, there is a reduced decomposition of $w$ with a factor $\s_k\s_{k-1}\s_k$, and hence $w \rightsquigarrow r$ with $r(k-1) > r(k) > r(k+1)$. If $w = r$, then this pattern, with position pair $(k-1,k+1)$, certainly satisfies the requirements of the theorem because $L_w = R_w = \emptyset$. Now assume, inductively, that $v$ is a permutation with $\max_k(v) > \min_k(v)$, and in which we can find a $321$-pattern as described in the statement of the theorem. Let the position pair for this pattern be $(i,j)$ and set $x := w(i)$ and $z:= w(j)$. Without loss of generality, suppose that the middle value of this pattern occurs to the left of position $k+1$. Choose $h \le k$ to be maximal such that $z < w(h) < x$, and set $y := w(h)$. Thus we can write
$$v = \ub{\cdots \phantom{j}\! x \ A \ y \ B} \ \ub{C \ z \phantom{j}\! \cdots}.$$
Suppose that $\ell(v\s_h) > \ell(v)$. Then certainly $\max_k(v\s_h) > \min_k(v\s_h)$. Our goal is to show that $v\s_h$ has such a $321$-pattern as well.

Consider, first, $h = k$. There are four cases.
\begin{quote}
\textit{Case 1:} $B = C = \emptyset$.\\
This cannot occur because $y > z$ and $\ell(v\s_k) > \ell(v)$.\\

\noindent \textit{Case 2:} $C = \emptyset$ (and hence $B \neq \emptyset$).\\
Then $R_v = \emptyset$, which means that $L_v = \emptyset$ and $A \cup B < x$. Then the values $x > y > v(k)$ give the desired $321$-pattern in $v\s_k$, with $L_{v\s_k} = R_{v\s_k} = \emptyset$.\\

\noindent \textit{Case 3:} $B = \emptyset$ (and hence $C \neq \emptyset$).\\
Let $t > k+1$ be minimal so that $v(t) < y$. (Such a $t$ exists because $z< y$ appears to the right of position $k+1$.) Then $v(k+1) > y > v(t)$ gives the desired $321$-pattern in $v\s_k$, with $L_{v\s_k} = R_{v\s_k} = \emptyset$.\\

\noindent \textit{Case 4:} $B \neq \emptyset$ and $C \neq \emptyset$.\\
Without loss of generality, we need only worry if $v(k) < x < v(k+1)$. Suppose that this is the case. If $v(k) > z$, then, like before, let $t > k+1$ be minimal so that $v(t) < v(k)$, in which case $v(k+1) > v(k) > v(t)$ gives the desired $321$-pattern in $v\s_k$ (with $L_{v\s_k} = R_{v\s_k} = \emptyset$). On the other hand, suppose that $v(k) < z$. Then, either $v(k+1) > L_v$ and $v(k) < R_v$, and so $x > y > z$ gives the desired $321$-pattern in $v\s_k$ (with $L_{v\s_k} = L_v \cup \{v(k+1)\}$ and $R_{v\s_k} = R_v \cup \{v(k)\}$), or, without loss of generality, $v(k)$ is greater than some element of $R_v$. Let $z' \in R(v)$ be the leftmost such element. Then $v(k+1) > v(k) > z'$ gives the desired $321$-pattern in $v\s_k$, with $L_{v\s_k} = R_{v\s_k} = \emptyset$.
\end{quote}

It remains to consider the permutation $v\s_h$, where $h \neq k$. Lengthening $v$ cannot change the fact that $x > y > z$ forms a $321$-pattern straddling position $k$, so we must worry about the sets $L_v = \{l_1 < \cdots < l_m\}$ and $R_v = \{r_m < \cdots < r_1\}$. Without loss of generality, suppose that multiplying by $\s_h$ affects $L_v$. 
One option for this impact is that $\s_h$ swaps the positions of $l_t$ and $l_{t+1}$. Then $l_{t+1} > l_t > x > z > r_{t+1}$, and $l_{t+1} > l_t > r_{t+1}$ would give the desired $321$-pattern in $v\s_h$, with $L_{v\s_h} = \{l_{t+2} < \cdots < l_m\}$ and $R_{v\s_h} = \{r_m < \cdots < r_{t+2}\}$. 
The other way to affect $L_v$ is to move $x$, meaning that $h \in \{i-1,i\}$. Because $\s_h$ lengthens the permutation, we need only worry about $h = i$, where $x < v(i+1)$. This means that $v(i+1) = l_1 \in L_v$. Then $l_1 > y > r_1$ gives the desired $321$-pattern in $v\s_h$, with $L_{v\s_h} = \{l_2 < \cdots < l_m\}$ and $R_{v\s_h} = \{r_m < \cdots < r_2\}$.
\end{proof}

\section{Future research}\label{sec:open}

There are many directions for future research on this topic, and we highlight two fo them here.

A first direction, as alluded to in Section~\ref{sec:maximal} is to better understand the ``excess'' discussed after the proof of Theorem~\ref{thm:maximal}. Given existing results, like the main theorem of \cite{tenner repetition}, it seems possible that one might be able to measure this surplus, or at least to characterize exactly when the weak inequality in~\eqref{eqn:max} is an inequality.

A second direction for future work is related to the Bruhat order. Repetition, both as analyzed above and as in \cite{tenner repetition}, is a feature of reduced decompositions, and reduced decompositions can be used to define the Bruhat order. It is natural to wonder, then, whether some essence of this connection between repeated letters and particular pattern occurrences is respected by the Bruhat order. For this, we look back to the enumeration of $321$- and $3412$-patterns, studied in \cite{tenner repetition}. Recall, as mentioned at the opening of this work, that the main result of that earlier paper had shown a relationship between this tally and $\supp(w)$. For a covering relation in the Bruhat order, $v \lessdot w$, either $\supp(v) = \supp(w)$, in which case the letter deleted from a reduced decomposition of $w$ to form a reduced decomposition of $v$ was not the only copy of that letter in the product, or $\supp(v) \subsetneq \supp(w)$, in which case it was the only copy of that letter. Theorems~\ref{thm:minimal} and~\ref{thm:maximal}, lead one to suspect that this might affect the number of $321$- or $3412$-patterns in the permutations. Unfortunately, while many examples show that the number of occurrences of these patterns is monotonic with respect to the Bruhat order, this is not always the case. Two counterexamples are $561234\lessdot 651234$ and $32541 \lessdot 52341$. That said, there is substantial evidence connecting $321$- and $3412$-patterns to repeated letters, and the subword property of the Bruhat order strongly suggests that there is more yet to say here.

\section*{Acknowledgements}

I am grateful to Sara Billey, Petter Br\"and\'en, Sylvie Corteel, and Svante Linusson for organizing the 2020 program in Algebraic and Enumerative Combinatorics at the Institut Mittag-Leffler, at which much of the research for this paper occurred. I am also grateful for the thoughtful suggestions and observations of an anonymous referee.

\end{document}